\DeclareMathOperator{\Tr}{Tr}
\DeclareMathOperator{\M}{\mathcal{M}}
\DeclareMathOperator{\K}{\mathcal{K}}
\DeclareMathOperator{\N}{\mathcal{N}}
\DeclareMathOperator{\rad}{rad}
\DeclareMathOperator{\Symm}{Symm}
\def\matrices{M_{m\times n}(K)}
\theoremstyle{plain}
\newtheorem{theorem}{Theorem}
\newtheorem{corollary}{Corollary}
\newtheorem{lemma}{Lemma}
\theoremstyle{definition}
\newtheorem{definition}{Definition}
\begin{document}
\title[Constant rank subspaces of symmetric forms]
{Dimension bounds for constant rank subspaces\\ 
of symmetric bilinear forms over a finite field}

\author[R. Gow]{Rod Gow}
\address{School of Mathematical Sciences\\
University College Dublin\\
 Ireland}
\email{rod.gow@ucd.ie}

\keywords{matrix, rank, constant rank subspace, symmetric bilinear form, finite field.}
\subjclass{15A03, 15A33}

\begin{abstract} 

Let $V$ be a vector space of dimension $n$ over the the finite field $\mathbb{F}_q$, where $q$ is odd, and 
let $\Symm(V)$ denote the space of symmetric bilinear forms defined on $V\times V$. 
We investigate constant rank $r$ subspaces of $\Symm(V)$ in this paper (equivalently, constant
rank $r$ subspaces of symmetric $n\times n$ matrices with entries in $\mathbb{F}_q$).
We have proved elsewhere that such a subspace has dimension at most $n$ if $q\geq r+1$ but in this paper we provide generally improved upper bounds.
Our investigations yield information about
common isotropic points of such subspaces, and also how the radicals of the elements in the subspace are distributed throughout $V$.

\end{abstract}
\maketitle
\section{Introduction}
\noindent  Let $K$ be a field and let $\matrices$  denote the space of $m\times n$ matrices over $K$, where we assume that $m\leq n$. We say that  a non-zero subspace $\M$ of $\matrices$ is a constant rank $r$ subspace if each non-zero element of
$\M$ has rank  $r$, where $1\leq r\leq m$.  

Let $q$ be a power of a prime, and let $\mathbb{F}_q$ denote the finite field of order $q$. We showed in a previous paper that a constant
rank $r$ subspace of $M_{m\times n}(\mathbb{F}_q)$ has dimension at most $n$ provided $q\geq r+1$, \cite{Gow}. This bound is optimal, subject
to the restriction on $q$, since for all $q$, $M_{m\times n}(\mathbb{F}_q)$ contains an $n$-dimensional constant rank $r$ subspace. 
For suitable values of $r$, $m$ and $n$, there are a few
constant rank $r$ subspaces of $M_{m\times n}(\mathbb{F}_2)$ of dimension greater than $n$, but we know of no other counterexamples when $q>2$.
(The absence of any other counterexamples may be partly explained by the difficulty of performing computer searches when $q>2$.)
We have the bound $\dim \M\leq m+n-r$ for any constant rank $r$ subspace of $M_{m\times n}(\mathbb{F}_q)$, valid for all $q$, but this bound is usually too large.

It follows that if $q\geq r+1$, a constant rank $r$ subspace of symmetric $n\times n$ matrices over $\mathbb{F}_q$ has dimension at most $n$, but there are reasons to expect that this upper bound can be improved when we exploit the symmetry of the matrices. This paper is devoted to finding
bounds for such constant rank subspaces of symmetric matrices. It builds on a previous paper, \cite{DGS}, and its key ingredient is an application of Lemma 1 of \cite{Fill}, which we exploited in \cite{Gow} to investigate constant rank subspaces.

For ease of exposition, we have stated our findings in terms of subspaces of symmetric bilinear forms defined on $V\times V$, where $V$ is a vector space of dimension $n$ over the field $K$ (and $K$ is usually $\mathbb{F}_q$). We have done this mainly to exploit the idea of the set
of common isotropic points of a subspace of symmetric bilinear forms, which arises naturally in the context of Theorem \ref{radical_totally_isotropic}, proved below. Our paper \cite{DGS} gave a formula for the number of common isotropic points in the finite field case, and we make good use of this formula
here. We are sure that the reader will have no trouble translating our results on constant rank subspaces of symmetric bilinear forms into identical results about constant rank subspaces of symmetric matrices. 

Throughout this paper, $\Symm(V)$ denotes the $K$-vector space of symmetric bilinear forms defined on $V\times V$.

\section{Common isotropic subspaces}

\noindent The following result is important for all our work in this paper. It is a straightforward application of the proof of Lemma 1
of \cite{Fill} to the context of symmetric matrices and symmetric bilinear forms.

\begin{theorem} \label{radical_totally_isotropic} Let $V$ be a vector space of dimension $n$ over the field $K$ and let $f$ and $g$ be non-zero elements of $\Symm(V)$. Suppose that $f$ has rank $r$ and that all $K$-linear combinations of $f$ and $g$ have rank
at most $r$. Then provided $|K|\geq r+1$, we have $g(u,w)=0$ for all elements $u$ and $w$ of the radical of $f$. Thus, the radical of $f$
is totally isotropic for all linear combinations of $f$ and $g$.

\end{theorem}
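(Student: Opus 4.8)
The plan is to fix two arbitrary vectors $u,w$ in $\rad(f)$ and prove directly that $g(u,w)=0$; the last sentence of the statement then follows at once, since $(af+bg)(u,w)=af(u,w)+bg(u,w)=0$ whenever $u\in\rad(f)$.

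First I would choose any complement $W$ of $\rad(f)$ in $V$, together with a basis $v_1,\dots,v_r$ of $W$; because $\rad(f)$ is precisely the radical, the matrix $D:=(f(v_i,v_j))_{i,j}$ is invertible. Write $B:=(g(v_i,v_j))_{i,j}$, $x:=(g(u,v_j))_j$, $y:=(g(v_i,w))_i$ and $\gamma:=g(u,w)$. For $t\in K$ set $h_t:=f+tg$; by hypothesis $\rank(h_t)\le r$, and also $\rank(g)\le r$ (the combination $0\cdot f+1\cdot g$). Applying the form $h_t$ to the ordered tuples $(v_1,\dots,v_r,u)$ and $(v_1,\dots,v_r,w)$, and using $f(v_i,w)=f(u,v_j)=f(u,w)=0$, produces the $(r+1)\times(r+1)$ matrix
\[
\Gamma(t)=\begin{pmatrix} D+tB & ty\\ tx^{\mathrm T} & t\gamma\end{pmatrix}.
\]
Since a symmetric form of rank at most $r$ factors through a map of rank at most $r$, every matrix $(h_t(a_i,b_j))_{1\le i,j\le r+1}$ built from $(r+1)$-tuples of vectors is singular; hence $\det\Gamma(t)=0$ for all $t\in K$.

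Now factor $t$ out of the last column: $\det\Gamma(t)=t\,p(t)$, where $p(t):=\det\begin{pmatrix} D+tB & y\\ tx^{\mathrm T} & \gamma\end{pmatrix}$. Thus $p$ vanishes at every nonzero element of $K$, that is, at at least $|K|-1\ge r$ distinct values. On the other hand $\deg p\le r$ (the first $r$ columns are affine in $t$, the last is constant), and expanding the determinant multilinearly in the first $r$ columns shows that the coefficient of $t^r$ in $p(t)$ equals $\det\begin{pmatrix} B & y\\ x^{\mathrm T} & \gamma\end{pmatrix}$. But this matrix is exactly $(g(a_i,b_j))$ for the tuples $(v_1,\dots,v_r,u)$ and $(v_1,\dots,v_r,w)$, hence singular because $\rank(g)\le r$; so in fact $\deg p\le r-1$. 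A polynomial of degree at most $r-1$ with at least $r$ roots is identically zero, so $p\equiv 0$, and evaluating at $t=0$ gives $p(0)=\gamma\det D=0$. Since $\det D\ne 0$ this forces $\gamma=g(u,w)=0$, which completes the argument.

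The hypothesis $|K|\ge r+1$ enters only in the final root count, and the real content of the proof is the observation that the top-degree coefficient of $p$ is again an $(r+1)\times(r+1)$ minor of the Gram matrix of $g$ and therefore vanishes — this is what drops $\deg p$ below the number of available roots. I expect the step needing the most care to be the bookkeeping that identifies that coefficient correctly: $\Gamma(t)$ must be assembled from the two ordered tuples so that the degree-$r$ part of $p(t)$ is literally the corresponding (generally non-principal) submatrix of $g$'s Gram matrix. One should also note that no assumption on $\mathrm{char}\,K$ is used, so the result holds in the form needed for the later finite-field applications.
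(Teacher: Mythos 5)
Your proof is correct and is essentially the paper's argument: both rest on the observation that an $(r+1)\times(r+1)$ minor of the pencil, viewed as a polynomial of degree at most $r$ in the pencil parameter, vanishes at $|K|\geq r+1$ points and is therefore identically zero, forcing the relevant entry $g(u,w)$ to vanish. The only real difference is that you parametrize the pencil as $f+tg$ rather than $g+xf$, which costs you two extra (correctly handled) steps — factoring $t$ out of the last column and killing the top coefficient of $p$ by noting it is itself a singular $(r+1)\times(r+1)$ Gram minor of $g$ — whereas the paper's orientation makes the leading coefficient $\pm a_{ij}\det A$ directly.
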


\begin{proof}
 
The result is trivial if $r=n$, so we may assume that $r<n$. We identify $V$ with $K^n$, and we choose a basis of $V$ with respect to which the
matrix of $f$ is
\[
C=\left(
\begin{array}
{cc}
        0&0\\
        0&A    
\end{array}
\right),
\] 
where $A$ is an invertible $r\times r$ symmetric matrix over $K$. The radical
of $f$ is then spanned by the $n-r$ standard basis vectors $e_1$, \dots, $e_{n-r}$ of $K^n$. 

Let the matrix of $g$ with respect to the same basis be
\[
 D=\left(
\begin{array}
{cc}
        A_1&A_2\\
        A_2^T&A_3    
\end{array}
\right),
\] 
where $A_1$ is an $(n-r)\times (n-r)$ symmetric matrix, $A_3$ is an $r\times r$ symmetric matrix, 
and $A_2$ is an $(n-r)\times r$ matrix.

For any element $x\in K$, the matrix of $g+xf$ with respect to the basis is
\[
 D+xC=
\left(
\begin{array}
{cc}
        A_1&A_2\\
        A_2^T&A_3+xA    
\end{array}
\right).
\]
We note that 
\[
 \det(A_3+xA)=\det A\det (A^{-1}A_3+xI_r)
\]
is a polynomial in $x$ of degree $r$ with leading coefficient $\pm \det A$.

Let $A_1=(a_{ij})$, $1\leq i,j\leq n-r$ and let $x_i$, $x_j$ denote the $i$-th and $j$-th rows of $A_2$ (we consider $x_i$ and $x_j$ as
row vectors of size $r$). Then
\[
 E=
\left(
\begin{array}
{cc}
        a_{ij}&x_i\\
        x_j^T&A_3+xA    
\end{array}
\right)
\]
is an $(r+1)\times (r+1)$ submatrix of $D+xC$, whose determinant must be $0$, since $E$ has rank at most $r$. As $\det E$ is a polynomial in
$x$ of degree at most $r$  whose coefficient of $x^r$ is $\pm a_{ij}\det A$, the supposition that $|K|\geq r+1$ implies that this polynomial is identically zero, and thus $a_{ij}=0$. This shows that $A_1$ is the zero matrix. It follows that $g$ is totally isotropic on the subspace spanned by
$e_1$, \dots, $e_{n-r}$, as required.

\end{proof}

\section{Spaces of symmetric bilinear forms of odd constant rank}

\noindent For most of rest of this paper, $V$ will denote a vector space
of dimension $n$ over $\mathbb{F}_q$. In \cite{DGS}, Theorem 2 and Corollary 3, we showed the following. Let $\M$ be a 
constant rank $r$ subspace of  $\Symm(V)$. Then if $r$ is odd, we have $\dim \M\leq r$. 

Furthermore,
$r$-dimensional constant rank $r$ subspaces of symmetric bilinear forms certainly exist. They may be constructed as follows. Let $U$ be a vector space of dimension $r$ over $\mathbb{F}_q$. Then there exists an $r$-dimensional constant rank $r$ subspace, $\N$, say, of symmetric bilinear
forms defined on $U\times U$. We can extend $\N$ to a subspace of  $\Symm(V)$ in an obvious way, where the extended forms all have the same radical of dimension $n-r$. 
Theorem \ref{common_radicals} below shows  that when $q$ and $r$ are odd, and $q\geq r+1$, this is the only way in which such constant rank  subspaces can be constructed.

We recall that if $\M$ is a subspace of  $\Symm(V)$, a vector $v\in V$ that satisfies $f(v,v)=0$
for all $f\in\M$ is called a \emph{common isotropic point} for the forms in $\M$.

\begin{theorem} \label{common_radicals}
Let $\M$ be a constant rank $r$ subspace of  $\Symm(V)$, where $V$ has
dimension $n$ over $\mathbb{F}_q$. Suppose that $q$ is odd and greater than $r$. Then if $r$ is odd, and $\dim \M=r$, all the non-zero
forms in $\M$ have the same radical. Thus, in this case, $\M$ is essentially defined on a space of dimension $r$.
\end{theorem}

\begin{proof} Let $f$ be a non-zero element of $\M$ and let $R$ denote the radical of $f$. Then $\dim R=n-r$. Moreover, as $q>r$, Theorem \ref{radical_totally_isotropic} implies that $R$ is totally isotropic for all the forms in $\M$. Thus, there are at least
$q^{n-r}$ common isotropic points for the elements of $\M$. Now, since all non-zero elements of $\M$ have odd rank, $q$ is odd, and
$\dim \M=r$, the total 
number of common isotropic points for $\M$ is $q^{n-r}$, by Theorem 5 of \cite{DGS}. It follows that $R$ is precisely the set of common isotropic points. However,
this applies to the radical of every non-zero element of $\M$. Thus, all radicals are identical, and $\M$ is essentially defined on
$V/R \times V/R$, where $V/R$ is $r$-dimensional.

\end{proof}

We can be more precise about  the radicals of the bilinear forms in a constant rank space if we examine the common isotropic points
more carefully. We recall that a \emph{subspace partition} of a finite-dimensional vector space $X$ over $\mathbb{F}_q$ is a collection of non-zero subspaces of $X$ such that each non-zero element of $X$ is in exactly one subspace of the partition. Thus, we may write
\[
 X=X_1 \cup \cdots \cup X_t,
\]
where $X_i\cap X_j=0$ if $i\neq j$. We say that the partition is non-trivial if $t>1$.

Subspace partitions have been studied in considerable detail because of their applications in finite geometry. We require an estimate for the smallest value of $t$ arising in a subspace partition. Our estimate must be common knowledge, and more refined estimates are available,
but we provide a proof of what we need, as it is not very complicated.

\begin{lemma} \label{partition_number}
Let $X$ be a vector space of dimension $n\geq 2$ over  $\mathbb{F}_q$ and let
\[
 X=X_1 \cup \cdots \cup X_t,
\]
where $\dim X_1\geq \dim X_2\geq \ldots \geq \dim X_t$,
describe a non-trivial subspace partition of $X$. 

Then, if $n=2m$ is even, $t\geq q^m+1$, and if $n=2m+1$ is odd, we have $t\geq q^{m+1}+1$.
\end{lemma}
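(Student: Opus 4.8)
The plan is to squeeze out two complementary lower bounds on $t$ from the elementary counting identity for a subspace partition, and then to keep the larger of the two. Write $d_i=\dim X_i$, so that $d_1\geq d_2\geq\cdots\geq d_t\geq 1$. Since the partition is non-trivial we have $X_1\neq X$, hence $d_1\leq n-1$ and in particular $n-d_1\geq 1$; this mild observation is what keeps both counts below non-vacuous.

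\emph{First bound.} The $q^n-q^{d_1}$ non-zero vectors of $X$ that do not lie in $X_1$ are partitioned by $X_2,\dots,X_t$. For $i\geq 2$ we have $X_i\cap X_1=0$, hence $d_i\leq n-d_1$, so each such $X_i$ accounts for at most $q^{n-d_1}-1$ of those vectors. Therefore
\[
q^n-q^{d_1}\leq (t-1)\,(q^{n-d_1}-1),
\]
and since the left-hand side equals $q^{d_1}(q^{n-d_1}-1)$ this yields $t\geq q^{d_1}+1$.

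\emph{Second bound.} Every part has dimension at most $d_1$, so
\[
q^n-1=\sum_{i=1}^{t}(q^{d_i}-1)\leq t\,(q^{d_1}-1),
\]
whence $t\geq (q^n-1)/(q^{d_1}-1)$. Because $n-d_1\geq 1$ we have $q^n-1>q^n-q^{n-d_1}=q^{n-d_1}(q^{d_1}-1)$, so $(q^n-1)/(q^{d_1}-1)>q^{n-d_1}$; as $t$ is an integer this improves to $t\geq q^{n-d_1}+1$.

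Combining the two bounds gives $t\geq\max\{q^{d_1},\,q^{n-d_1}\}+1$. If $n=2m$, then at least one of the positive integers $d_1$ and $n-d_1$ is $\geq m$, so $t\geq q^m+1$. If $n=2m+1$, then $d_1$ and $n-d_1$ are positive integers summing to an odd number and hence cannot both be $\leq m$, so one of them is $\geq m+1$ and $t\geq q^{m+1}+1$. I do not expect a real obstacle: the only points needing care are extracting $n-d_1\geq 1$ from non-triviality, and noticing that it is precisely the strict inequality in the second bound, together with the integrality of $t$, that supplies the extra ``$+1$'' needed in the odd-dimensional case.
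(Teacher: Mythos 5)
Your proof is correct and follows essentially the same approach as the paper: both arguments rest on the counting identity $q^n-1=\sum_i(q^{d_i}-1)$ together with the disjointness bound $d_i\leq n-d_1$ for $i\geq 2$, the paper merely organizing the two resulting inequalities as a case split on whether $\dim X_1$ exceeds $m$ rather than taking the maximum of the two bounds as you do. Your uniform presentation is a slight streamlining, but the underlying ideas are identical.
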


\begin{proof}
 
Suppose first that $n=2m$. We begin by considering the case that $\dim X_1>m$. Then we have $\dim X_1=m+s$, where $1\leq s\leq m-1$. Since
$X_1\cap X_2=0$, we have
\[
 2m\geq \dim X_1+\dim X_2
\]
and thus $\dim X_2\leq m-s$. It follows from our labelling of indices that $\dim X_i\leq m-s$ for $i\geq 2$.

Suppose that there are exactly $a_i$ $i$-dimensional subspaces in the partition, where $1\leq i\leq m-s$. Then we have
\[
 q^{2m}-q^{m+s}=\sum_{i=1}^{m-s} a_i(q^i-1).
\]
Now 
\[
 \sum_{i=1}^{m-s} a_i(q^i-1)\leq (a_1+\cdots +a_{m-s})(q^{m-s}-1)=(t-1)(q^{m-s}-1).
\]
We deduce that
\[
 q^{m+s}(q^{m-s}-1)\leq (t-1)(q^{m-s}-1)
\]
and hence $t\geq q^{m+s}+1\geq q^{m+1}+1$. 

Next, we suppose that $\dim X_1\leq m$. The same argument as above then yields that
\[
 q^{2m}-1\leq t(q^m-1),
\]
and thus $t\geq q^m+1$. Thus in all cases we have therefore $t\geq q^m+1$.

Suppose now that $n=2m+1$ is odd. Again, we begin by considering the case that $\dim X_1=m+s+1$, where $0\leq s\leq m-1$. Then we must have
$\dim X_2\leq m-s$ and we obtain as before
\[
 q^{2m+1}-q^{m+s+1}\leq (t-1)(q^{m-s}-1).
\]
It follows that $t-1\geq q^{m+s+1}$ and hence $t\geq q^{m+s+1}+1\geq q^{m+1}+1$. 

Finally, suppose that $\dim X_1\leq m$. This time we obtain
\[
 q^{2m+1}-1\leq t(q^m-1)
\]
and thus
\[
 t\geq \frac{q^{2m+1}-1}{q^m-1}.
\]
Since the inequality
\[
 \frac{q^{2m+1}-1}{q^m-1}>q^{m+1}+1
\]
holds, we see once more that in all cases for odd $n$, $t\geq q^{m+1}+1$.

\end{proof}

The two bounds for $t$ are optimal, as is well known. When $n=2m$ is even, $X$ may be covered by a spread of $q^m+1$ subspaces of dimension
$m$. Suppose now that $n=2m+1$ is odd. Let $Y$ be a vector space of dimension $2m+2$ over $\mathbb{F}_q$ that is covered by 
a spread of $q^{m+1}+1$ subspaces of dimension $m+1$, say
\[
 Y=Y_1 \cup \cdots \cup Y_k,
\]
where $k=q^{m+1}+1$. We may choose $X$ to be a subspace of codimension 1 in $Y$ that contains $Y_1$. Then
\[
 X=(Y_1\cap X) \cup \cdots \cup (Y_k\cap X)=Y_1\cup \cdots \cup (Y_k\cap X)
\]
is a partition of $X$ into one subspace $Y_1$ of dimension $m+1$ and $q^{m+1}$ subspaces 
$Y_2\cap X$, \dots,  $Y_k\cap X$ of dimension $m$. Thus we have a partition of $X$ into $q^{m+1}+1$ subspaces. 

Let $\langle u\rangle$ denote the one-dimenional subspace of $V$ spanned by a non-zero vector $u$ in $V$ and let $\M$ denote a subspace
of $\Symm(V)$. We set
\[
 \M_{\langle u\rangle}=\{ f\in \M: u\in \rad f\}.
\]
It is clear that $\M_{\langle u\rangle}$ is a subspace of $\M$. 

Suppose now that $\M$ is a constant rank $n-1$ subspace. Let $\langle u\rangle$ and $\langle w\rangle$ be different one-dimensional subspaces of $V$ for which $\M_{\langle u\rangle}$ and $\M_{\langle w\rangle}$ are both non-zero. Then we claim that $\M_{\langle u\rangle}\cap \M_{\langle w\rangle}=0$. For, if $f$ is a non-zero element of $\M_{\langle u\rangle}\cap \M_{\langle w\rangle}$, the two-dimensional subspace spanned by
$u$ and $w$ is in $\rad f$, which contradicts the fact that $f$ has rank $n-1$. 

Continuing with the hypothesis that $\M$ is a constant rank $n-1$ subspace, let $\langle u_1\rangle$, \dots, $\langle u_t\rangle$ be all the different one-dimensional subspaces of $V$ such that for $1\leq i\leq t$, $\langle u_i\rangle$ is the radical of a non-zero element of $\M$. We see then that the $\M_{\langle u_i\rangle}$, $1\leq i\leq t$, form a subspace partition of $\M$. Furthermore, Theorem \ref{radical_totally_isotropic}
implies that $g(u_i,u_i)=0$ for all $g\in \M$ and $1\leq i\leq t$. Thus $\M$ has at least $t(q-1)$ non-zero common isotropic points. This fact will enable us to obtain more information about common radicals.

\begin{theorem} \label{improved_common_radicals}
 
 Suppose that $n$ is even and $\M$ is a $d$-dimensional constant rank $n-1$ subspace of $\Symm(V)$. Suppose also that $q\geq n$. Then all the non-zero elements of $\M$ have the same radical under any of the following conditions:

\begin{enumerate}
\item $n=6m$ and $4m\leq d\leq 6m-1$;
\item $n=6m+2$ and $4m+1\leq d\leq 6m+1$;
\item $n=6m+4$ and $4m+3\leq d\leq 6m+3$.
\end{enumerate}
\end{theorem}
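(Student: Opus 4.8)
The plan is to set the subspace-partition structure of $\M$ against the known count of common isotropic points, forcing the partition to be trivial. A $1$-dimensional subspace consists of scalar multiples of a single form, so I would first dispose of that case and assume $d\ge 2$. Each non-zero $f\in\M$ has rank $n-1$, hence $\rad f$ is a line; let $\langle u_1\rangle,\dots,\langle u_t\rangle$ be the distinct lines occurring as radicals of non-zero elements of $\M$. As recorded just before the statement, the subspaces $\M_{\langle u_1\rangle},\dots,\M_{\langle u_t\rangle}$ form a subspace partition of $\M$, and by Theorem \ref{radical_totally_isotropic} — which applies since $q\ge n=(n-1)+1$ — every $u_i$ is a common isotropic point for $\M$, so $\M$ has at least $t(q-1)$ non-zero common isotropic points. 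If $t=1$ then every non-zero element of $\M$ has radical $\langle u_1\rangle$ and we are done, so from now on I would assume the partition is non-trivial, that is $t\ge 2$.

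Next I would bring in the two quantitative inputs. On one hand, Lemma \ref{partition_number} applied to the non-trivial partition of the $d$-dimensional space $\M$ yields $t\ge q^{\lceil d/2\rceil}+1$. On the other hand, since $q$ is odd and every non-zero element of $\M$ has odd rank $n-1$, the number $N$ of common isotropic points of $\M$ equals $q^{\,n-d}$: this is Theorem 5 of \cite{DGS}, and can also be obtained directly by double-counting the pairs $(v,f)$ with $v\in V$, $f\in\M$ and $f(v,v)=0$, using that a non-zero $f$ of odd rank $r$ has exactly $q^{\,n-r}q^{\,r-1}=q^{\,n-1}$ isotropic vectors. The $t$ distinct radical lines supply $t(q-1)$ distinct non-zero common isotropic points, so
\[
q^{\lceil d/2\rceil}+1\ \le\ t\ \le\ \frac{q^{\,n-d}-1}{q-1}\ =\ 1+q+\cdots+q^{\,n-d-1}\ <\ q^{\,n-d}.
\]

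Finally I would argue that this chain is impossible for $d$ in each of the three stated ranges, which forces $t=1$. The chain gives $q^{\lceil d/2\rceil}<q^{\,n-d}$, hence $d+\lceil d/2\rceil=\lceil 3d/2\rceil<n$; since $\lceil 3d/2\rceil$ is non-decreasing in $d$ it is enough to check $\lceil 3d/2\rceil\ge n$ at the least admissible $d$ in each case — for $n=6m$, $d=4m$ gives $6m$; for $n=6m+2$, $d=4m+1$ gives $6m+2$; for $n=6m+4$, $d=4m+3$ gives $6m+5$ — each a contradiction. Thus $t=1$, and all non-zero elements of $\M$ have the same radical. The step I expect to need the most care is securing the count $N=q^{\,n-d}$ in the generality $\dim\M=d$ (the excerpt only displays it for $\dim\M=n-1$); the rest is assembling Theorem \ref{radical_totally_isotropic}, Lemma \ref{partition_number}, and an elementary estimate of a geometric sum.
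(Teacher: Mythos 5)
Your proof is correct and follows essentially the same route as the paper: the radical lines give a subspace partition of $\M$ whose parts are bounded below in number by Lemma \ref{partition_number}, while Theorem \ref{radical_totally_isotropic} and the isotropic-point count $q^{n-d}$ from Theorem 5 of \cite{DGS} bound $t$ above, yielding a contradiction. The only (harmless) difference is that you unify the paper's three case-by-case computations into the single inequality $\lceil 3d/2\rceil<n$ and verify all $d$ in each range by monotonicity, where the paper instead reduces to the minimal $d$ in each case.
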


\begin{proof}
 Let $\langle u_1\rangle$, \dots, $\langle u_t\rangle$ be all the different one-dimensional subspaces of $V$ that occur as the radical of a non-zero element of $\M$. Clearly, all the non-zero elements of $\M$ have the same radical precisely when $t=1$. Thus, we want to show that
$t=1$ under any of the stated conditions. We note that it suffices to prove the theorem whenever $d$ assumes the lower bound in each case.
We also note that $\M$ has $q^{n-d}-1$ non-zero common isotropic points by Theorem 5 of \cite{DGS}.

We consider the three possibilities in turn. Suppose that $n=6m$ and $d=4m$. Then $\M$ has exactly $q^{2m}-1$ non-zero common isotropic points. Suppose
if possible that $t>1$. Then, since we have a non-trivial partition of $\M$ into
$t$ subspaces, Lemma \ref{partition_number} implies that $t\geq q^{2m}+1$. Furthermore, we must have $t(q-1)\leq q^{2m}-1$. However,
\[
 t(q-1)\geq (q^{2m}+1)(q-1)>q^{2m}-1,
\]
and we have a contradiction. Thus, $t=1$ here.

Suppose next that $n=6m+2$ and $d=4m+1$. Then $\M$ has exactly $q^{2m+1}-1$ non-zero common isotropic points. Suppose
if possible that $t>1$. Then
Lemma \ref{partition_number} implies that $t\geq q^{2m+1}+1$ and  we must also have $t(q-1)\leq q^{2m+1}-1$. However,
\[
 t(q-1)\geq (q^{2m+1}+1)(q-1)>q^{2m+1}-1,
\]
and we have a contradiction. Thus, $t=1$ holds here as well.

Suppose finally that $n=6m+4$ and $d=4m+3$. Accordingly, $\M$ has exactly $q^{2m+1}-1$ non-zero common isotropic points. Suppose
that $t>1$. Then
Lemma \ref{partition_number} implies that $t\geq q^{2m+2}+1$ and  we must also have $t(q-1)\leq q^{2m+1}-1$. However,
\[
 t(q-1)\geq (q^{2m+2}+1)(q-1)>q^{2m+1}-1,
\]
and we have another contradiction. Thus, $t=1$ holds in all cases.

\end{proof}

\section{Constant rank $n-1$ subspaces when $n$ is odd}

\noindent Dimension bounds and analysis of structure for even constant rank subspaces are complicated by the fact that the formula for the number of common isotropic points is not so easy to use effectively, compared with the corresponding formula in the odd rank case. The difficulty arises from the fact that there are two different types of symmetric bilinear forms of even rank, namely, those of positive type and those of negative type.

We can define the type in various ways, and our nomenclature may not be universally used, but the two types are distinguished by the number of isotropic points for a given form. A symmetric bilinear form of even rank and positive type has more isotropic points than a symmetric bilinear form of the same rank
and negative type. See, for example, Lemma 1 of \cite{DGS}.

At present, we do not have any good idea of the number of elements of positive type compared with the number of elements of negative type
in an even constant rank space. This makes it more difficult to obtain good bounds for the dimension of an even constant rank
space  of symmetric bilinear forms. We confine ourselves to examining one special case in this section.

\begin{theorem} \label{even_constant_rank_dimension}
 
Let $n\geq 3$ be an odd integer and let $\M$ be a constant rank $n-1$ subspace of  $\Symm(V)$. Then if $q$ is odd and at least $n$, we have $\dim \M\leq n-1$. 
\end{theorem}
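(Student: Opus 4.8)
The plan is to show that a constant rank $n-1$ subspace $\M$ of $\Symm(V)$ cannot have dimension $n$; since \cite{Gow} already gives $\dim\M\le n$ (the hypothesis $q\ge n$ being precisely $q\ge (n-1)+1$), this yields $\dim\M\le n-1$. So assume for contradiction that $\dim\M=n$. Every non-zero $f\in\M$ then has a one-dimensional radical, as $\rank f=n-1$. Let $\langle u_1\rangle,\dots,\langle u_t\rangle$ be the distinct one-dimensional subspaces of $V$ that occur as radicals of non-zero forms in $\M$. As in the discussion preceding Theorem \ref{improved_common_radicals}, the subspaces $\M_{\langle u_i\rangle}$ form a subspace partition of $\M$, and Theorem \ref{radical_totally_isotropic} shows that each $u_i$ is a common isotropic point; hence $\M$ has at least $t(q-1)+1$ common isotropic points (counting the zero vector).

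Next I would bound from above the number $N$ of common isotropic points of $\M$. Since $n-1$ is even, Theorem 5 of \cite{DGS} --- equivalently, a direct evaluation of $q^{-n}\sum_{f\in\M}\sum_{v\in V}\psi(f(v,v))$ for a fixed non-trivial additive character $\psi$ of $\mathbb{F}_q$, using the standard Gauss sums for non-degenerate quadratic forms of even rank --- gives $N=1+(P-M)q^{-(n-1)/2}$, where $P$ and $M$ count the forms in $\M$ of positive and of negative type and $P+M=q^n-1$. As $N$ is a positive integer and $|P-M|\le q^n-1$, it follows that $N\le q^{(n+1)/2}$.

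I would then split into cases on $t$. If $t>1$, the $\M_{\langle u_i\rangle}$ constitute a non-trivial subspace partition of the $n$-dimensional space $\M$, so writing $n=2\cdot\tfrac{n-1}{2}+1$ and applying Lemma \ref{partition_number} gives $t\ge q^{(n+1)/2}+1$. Then $N\ge t(q-1)+1\ge (q^{(n+1)/2}+1)(q-1)+1=q^{(n+1)/2}(q-1)+q>q^{(n+1)/2}$, contradicting $N\le q^{(n+1)/2}$. Hence $t=1$, meaning that all non-zero forms in $\M$ have one and the same radical $R$, a one-dimensional subspace. Passing to the quotient, $\M$ induces a subspace $\overline{\M}$ of $\Symm(V/R)$ via $f\mapsto \bar f$, where $f(v,w)=\bar f(v+R,w+R)$; this map is injective because $R$ lies in the radical of every $f\in\M$, so $\dim\overline{\M}=n$, and every non-zero element of $\overline{\M}$ is non-degenerate of rank $n-1=\dim(V/R)$. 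Thus $\overline{\M}$ is a constant rank $n-1$ subspace of $\Symm(V/R)$ with $q\ge (n-1)+1$, so \cite{Gow} forces $\dim\overline{\M}\le n-1$, a contradiction. Therefore $\dim\M\le n-1$.

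The step I expect to be the main obstacle is the handling of $N$: in the even-rank case the count of common isotropic points is controlled by the quantity $P-M$, about which (as noted in the paper) little is known, so $N$ cannot be pinned down exactly. What makes the argument go through is that the crude bound $N\le q^{(n+1)/2}$ coming from $|P-M|\le q^n-1$ is already too small to coexist with the lower bound $t\ge q^{(n+1)/2}+1$ forced by Lemma \ref{partition_number} when $t>1$; the leftover case $t=1$ is then dispatched by descending modulo the common radical and invoking the dimension bound one step down.
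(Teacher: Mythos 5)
Your proof is correct and follows essentially the same route as the paper's: partition $\M$ by the one-dimensional radicals, apply Lemma \ref{partition_number} to get $t\geq q^{(n+1)/2}+1$ when $t>1$, compare with the isotropic-point count from Theorem 5 of \cite{DGS} using $|A-B|\leq A+B=q^n-1$, and dispose of the $t=1$ case by passing to $V/R$ and invoking the dimension bound in one lower dimension. The only differences are cosmetic (order of the two cases, and counting the zero vector among the isotropic points).
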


\begin{proof}
 We set $n=2m+1$ and follow the proof of Theorem \ref{improved_common_radicals} fairly closely.
Suppose if possible that $\dim \M=n$. Let $A$ be the number of non-zero elements in $\M$ of positive type and let 
$B$ be the number of non-zero elements in $\M$ of negative type. Then of course $A+B=q^n-1$ and the number of non-zero 
common isotropic points of $\M$ is
\[
 (A-B)q^{-m},
\]
by Theorem 5 of \cite{DGS}. 

Let $\langle u_1\rangle$, \dots, $\langle u_t\rangle$ be all the different one-dimensional subspaces of $V$ that occur as radicals of non-zero elements of $\M$. We claim that we cannot have $t=1$. For if $t=1$, all the non-zero elements of $\M$ have the same radical, 
$\langle u_1\rangle$. In this case, we set $\overline{V}=V/\langle u_1\rangle$. Then we can identify $\M$ with an $n$-dimensional constant rank
$n-1$ subspace of symmetric bilinear forms defined on $\overline{V}\times \overline{V}$. However, since $\dim \overline{V}=n-1$, the largest
dimension of a constant rank $n-1$ subspace of symmetric bilinear forms defined on $\overline{V}\times \overline{V}$ is $n-1$. This contradiction implies that $t>1$. 

We deduce that there is a non-trivial subspace partition of $\M$ by $t$ subspaces and since $\dim \M=2m+1$, we obtain the estimate
\[
 t\geq q^{m+1}+1
\]
from Lemma \ref{partition_number}. Now, as we are assuming that $q\geq n$, the radical of each non-zero element is totally isotropic
with respect to all elements of $\M$ and hence there are at least
\[
 (q^{m+1}+1)(q-1)
\]
non-zero common isotropic points of $\M$. It follows that
\[
 (q^{m+1}+1)(q-1)\leq (A-B)q^{-m}\leq (A+B)q^{-m}=(q^{2m+1}-1)q^{-m}
\]
and hence 
\[
 (q^{m+1}+1)(q^{m+1}-q^m)\leq q^{2m+1}-1.
\]
This latter inequality is impossible, and we have a contradiction. We deduce that $\dim \M\leq n-1$ when $q$ is odd and at least $n$.

\end{proof}

We note that the hypothesis that $q$ is odd is essential in Theorem \ref{even_constant_rank_dimension}. For, if $q$ is a power of 2, and $n=2m+1$ is odd, there is a $2m+1$-dimensional constant rank $2m$ subspace of alternating bilinear forms defined on $V\times V$. Since alternating bilinear forms are symmetric in characteristic 2, Theorem \ref{even_constant_rank_dimension} requires that $q$ is odd.

\section{Examples of constant rank $n-1$ subspaces}

\noindent Let $n\geq 3$ be an integer and suppose that $n+1$ is not a prime. Write $n+1=mr$, where $m$ and $r$ are integers, and $1<m<n+1$. Suppose that 
$K$ is an arbitrary field that has a separable field extension $L$ of degree $n+1$ and consider $L$ as a vector space over $K$. We
let $\Tr:L\to K$ denote the usual trace form.

 For each element $z$ of $L$, we define an element $f_z$, say, of $\Symm(L)$ by setting
\[
 f_z(x,y)=\Tr(z(xy))
\]
for all $x$ and $y$ in $L$. Each form $f_z$ is non-degenerate if $z\neq 0$, since the trace form is non-zero under the hypothesis of separability.

Suppose now that $L$ contains a subfield $M$ such that $K<M<L$ and $M$ has degree $m$ over $K$. (The hypothesis is obviously met
if $K= \mathbb{F}_q$.)  
Let $V$ be any $K$-subspace of codimension 1 in $L$ that contains $M$ and let $f_z'$ denote the restriction of $f_z$ to
$V\times V$ for each $z\in L$.
Let $\mathcal{M}$ denote the subspace of all bilinear forms
$f_z'$. 

Let us assume that $z\neq 0$. Then since $f_z$ is non-degenerate, $f'_z$ has rank $n$ or $n-1$. Now, 
$\mathcal{M}$ is an $(n+1)$-dimensional subspace of  $\Symm(V)$ and hence its non-zero elements cannot all have rank $n$. We deduce that there exists some $z\in L$ such that $f_z'$ has rank $n-1$. Let $\langle u\rangle$ be the radical of $f_z'$.  It is then straightforward
to see that the radical of $f'_{zu}\in \M$ is spanned by the unity element $1$. 

Consider now the set $\K$, say, of all symmetric bilinear forms $f'_{zuw}$, as $w$ runs over the subfield $M$. It is again easy to see that if $w\neq 0$,
$f'_{zuw}$ has rank $n-1$, and its radical is $\langle w^{-1}\rangle$. Thus the radicals are different subspaces provided we use elements
of $M$ that are not $K$-multiples of each other. Furthermore, $\K$ is a $K$-subspace of dimension $m$.

This discussion provides the justification for the following result.

\begin{theorem} \label{different_radicals}
 
Let $V$ be a vector space of dimension $n$ over $\mathbb{F}_q$, where $n\geq 3$. If $n=2m-1$ is odd, there exists an $m$-dimensional constant rank 
$n-1$ subspace of $\Symm(V)$, in which all linearly independent forms have different radicals. 

If $n$ is even and $n+1=3m$ is divisible by $3$, there also exists an $m$-dimensional constant rank 
$n-1$ subspace of $\Symm(V)$, in which all linearly independent forms have different radicals. 
\end{theorem}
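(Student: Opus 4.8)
The plan is to realise each of the two assertions as an instance of the construction carried out in the paragraphs preceding the statement, choosing the parameters $m$ and $r$ with $n+1=mr$ and the intermediate field $M$ appropriately.

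Consider first the case $n=2m-1$ odd, so that $n+1=2m$. Since $n\geq 3$ we have $m\geq 2$, hence $n+1=2m$ is not prime and $1<m<n+1$. Take $K=\mathbb{F}_q$ and $L=\mathbb{F}_{q^{n+1}}$; this is a separable extension of $K$ of degree $n+1$, and since $m$ divides $n+1$ the subfield $M=\mathbb{F}_{q^{m}}$ satisfies $K<M<L$ with $[M:K]=m$. All the hypotheses needed for the construction hold, so it produces a codimension-one subspace $V$ of $L$ containing $M$, an element $z\in L$ for which $f'_{z}$ has rank $n-1$, and the $m$-dimensional subspace $\K$ of $\Symm(V)$ formed by the bilinear forms $f'_{zuw}$, $w\in M$, each non-zero such form having rank $n-1$ and radical $\langle w^{-1}\rangle$.

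For the second case, with $n$ even and $n+1=3m$, the argument is identical: $n\geq 3$ forces $m\geq 2$, so $n+1$ is not prime and $1<m<n+1$, and $L=\mathbb{F}_{q^{n+1}}$ contains $M=\mathbb{F}_{q^{m}}$ of degree $m$ over $K=\mathbb{F}_q$ because $m\mid(n+1)$. Applying the construction with $r=3$ gives the desired $m$-dimensional constant rank $n-1$ subspace $\K$.

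In both cases it only remains to note that linearly independent forms in $\K$ have distinct radicals. Indeed, if $g=f'_{zuw}$ and $g'=f'_{zuw'}$ with $w,w'\in M\setminus\{0\}$ share a radical, then $\langle w^{-1}\rangle=\langle w'^{-1}\rangle$, so $w'\in Kw$ and hence $g'\in Kg$; equivalently, linearly independent forms have different radicals. I do not expect any genuine obstacle here: over a finite field separability is automatic and the required subfield $M$ exists precisely because $m$ divides $n+1$, so the entire substance of the theorem is already contained in the preceding construction, and the proof consists of recognising the two numerical situations as special cases of it.
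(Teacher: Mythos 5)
Your proposal is correct and follows exactly the route the paper intends: the theorem is stated as a direct consequence of the preceding trace-form construction, and your two parameter choices ($n+1=2m$ with $M=\mathbb{F}_{q^m}$, and $n+1=3m$ with $M=\mathbb{F}_{q^m}$) are precisely the instances the author has in mind. The verification that proportional radicals force proportional forms matches the paper's remark that the radicals $\langle w^{-1}\rangle$ are distinct for $w$ not $K$-multiples of each other.
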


The subspace described in the first part of Theorem \ref{different_radicals} has maximal dimension subject to the radicals being different
and $q$ sufficiently large, as we now show.

\begin{theorem} \label{maximum_dimension_different_radicals}
 Let $V$ be a vector space of dimension $n$ over $\mathbb{F}_q$, where $n=2m-1$ is odd and at least $3$. 
Let $\M$ be a $d$-dimensional constant rank 
$n-1$ subspace of $\Symm(V)$, in which all linearly independent forms have different radicals. Then if
$q\geq n$, we have $d\leq m$.

\end{theorem}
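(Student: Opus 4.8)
The plan is to run the argument of Theorem~\ref{even_constant_rank_dimension} almost verbatim; the one new ingredient is that the hypothesis that linearly independent forms have different radicals lets us determine the number of parts of the relevant subspace partition exactly, instead of merely estimating it via Lemma~\ref{partition_number}.

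First I would set $n = 2m-1$ and let $\langle u_1\rangle, \dots, \langle u_t\rangle$ be all the distinct one-dimensional subspaces of $V$ that occur as the radical of a non-zero element of $\M$. Since $\M$ has constant rank $n-1$, every such radical is a line, so, as in the discussion preceding Theorem~\ref{improved_common_radicals}, the subspaces $\M_{\langle u_i\rangle}$ form a subspace partition of $\M$. Each $\M_{\langle u_i\rangle}$ is one-dimensional: it is non-zero by the choice of $u_i$, and two independent forms lying in it would both have radical $\langle u_i\rangle$, contrary to hypothesis. Hence the partition consists of precisely the $t = (q^d-1)/(q-1)$ one-dimensional subspaces of $\M$. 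Since $q\geq n = (n-1)+1$, Theorem~\ref{radical_totally_isotropic} gives $g(u_i,u_i)=0$ for all $g\in\M$ and all $i$, so the $t(q-1) = q^d-1$ vectors $\lambda u_i$ with $\lambda\neq 0$ are pairwise distinct non-zero common isotropic points of $\M$.

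Next I would invoke the count of common isotropic points. Let $A$ and $B$ be the numbers of non-zero forms in $\M$ of positive and of negative type respectively; all non-zero forms have even rank $n-1 = 2m-2$, so $A+B = q^d-1$. Theorem~5 of \cite{DGS}, applied as in the proof of Theorem~\ref{even_constant_rank_dimension} but with the rank now equal to $2m-2$, then shows that $\M$ has exactly
\[
 \frac{q^{2m-1} + (A-B)q^m}{q^d} - 1
\]
non-zero common isotropic points. Comparing this with the lower bound obtained above and using the crude estimate $A-B \leq A+B = q^d-1$, one gets $q^{2d} \leq q^{2m-1} + q^{d+m} - q^m$. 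To finish, suppose for contradiction that $d\geq m+1$ and write $d = m+s$ with $s\geq 1$; then the right-hand side is strictly smaller than $q^{2m-1}+q^{2m+s} \leq 2q^{2m+s} \leq q^{2m+s+1} \leq q^{2m+2s} = q^{2d}$, a contradiction. Hence $d\leq m$, and this bound is attained by the construction in Theorem~\ref{different_radicals}.

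The one step needing care is the isotropic-point count: because the forms here have even rank, it carries the term $A-B$ rather than being a clean power of $q$, which forces the detour through $A-B\leq A+B$. I expect this to be the only real obstacle; once the formula is in hand, everything else is the same bookkeeping as in Theorem~\ref{even_constant_rank_dimension} and Lemma~\ref{partition_number}.
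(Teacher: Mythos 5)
Your proposal is correct and follows essentially the same route as the paper: the hypothesis forces the radical lines to be in bijection with the $(q^d-1)/(q-1)$ lines of $\M$, Theorem \ref{radical_totally_isotropic} turns these into $q^d-1$ non-zero common isotropic points, and comparison with the formula from Theorem 5 of \cite{DGS} together with $A-B\leq A+B$ yields the contradiction. The only (immaterial) difference is that the paper reduces to the case $d=m+1$ while you handle all $d=m+s$ with $s\geq 1$ directly.
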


\begin{proof}
Suppose, if possible, that $d=m+1$. Then there are $(q^{m+1}-1)/(q-1)$ different one-dimensional subspaces of $V$ that each occur as
the radical of a non-zero element of $\M$. Since we are assuming that $\M$ is a constant rank space and $q\geq n$, Theorem \ref{radical_totally_isotropic} implies that we obtain thereby $q^{m+1}-1$ non-zero common isotropic points for $\M$. 

Let $A$ be the number of non-zero elements in $\M$ of positive type and let 
$B$ be the number of non-zero elements in $\M$ of negative type. Then the number of  
common isotropic points of $\M$ is
\[
q^{2m-1-(m+1)}+ (A-B)q^{2m-1-(m+1)-(m-1)}=q^{m-2}+(A-B)q^{-1},
\]
by Theorem 5 of \cite{DGS}. 

Comparing the estimate for the number of common isotropic points with the formula above, we deduce that
\[
 q^{m+1}\leq q^{m-2}+(A-B)q^{-1}\leq q^{m-2}+q^m.
\]
This inequality is clearly impossible, and hence $d\leq m$.

\end{proof}

\section{An unusual constant rank $4$ subspace}

\noindent Let $V$ denote the field $\mathbb{F}_{3^5}$ considered as a 5-dimensional vector space over $\mathbb{F}_{3}$. Let $\phi: V\times V\times V\to  \mathbb{F}_{3}$ be the symmetric trilinear form given by
\[
 \phi(x,y,z)=\Tr (x^9yz+xy^9z+xyz^9),
\]
where $\Tr$ is the trace form from $\mathbb{F}_{3^5}$ to $\mathbb{F}_{3}$. 

For $x\in \mathbb{F}_{3^5}$, let $\phi_x:V\times V\to \mathbb{F}_{3}$ be defined by
\[
 \phi_x(y,z)=\phi(x,y,z).
\]
It is clear that $\phi_x$ is an element of $\Symm(V)$ and the set of all such $\phi_x$, as $x$ ranges over $\mathbb{F}_{3^5}$, is a vector space, $\M$, say, over $\mathbb{F}_{3}$.  

It turns out that $\M$ is a 5-dimensional constant rank 4 subspace. We note that Theorem \ref{even_constant_rank_dimension} shows
that such a subspace does not exist over $\mathbb{F}_{q}$, when $q\geq 5$, and this fact shows that, in at least one non-trivial case,
the theorem is optimal. 

An expanation of why $\M$ has this constant rank property is by no means straightforward and it is embedded in a more general exploration
of the trilinear form $\phi$ carried out by Ward in a paper published in 1975, \cite{Ward}. He showed that $\M$ is  invariant under a linear action of the Mathieu group $M_{11}$, of order 7920. This means that $\M$ is also invariant under $M_{11}$, and this accounts for some of the special aspects of this constant rank space. 

We can give a short description of the simpler symmetries of $\phi$ and $\M$, but refer to Ward's paper for a complete exposition of this fascinating subject. Let $\epsilon$ denote an element of order 11 in $\mathbb{F}_{3^5}$ and let $\sigma$ denote the Frobenius automorphism
$x\to x^3$ of $\mathbb{F}_{3^5}$. It is easy to see that multiplication by $\epsilon$ defines a linear transformation $T$, say, of order 11 of $V$, which acts irreducibly on the vector space. It is clear that 
\[
 \phi(Tx,Ty,Tz)=\phi(x,y,z),
\]
which shows that $T$ fixes $\phi$ and hence leaves $\M$ invariant. 

It is also clear that $\sigma$ defines a linear transformation, $S$, say, of $V$  of order 5, which satisfies
$S^{-1}TS=T^4$. The invariance of the trace form with respect to Galois automorphisms implies that
\[
 \phi(Sx,Sy,Sz)=\phi(x,y,z)
\]
and thus $\M$ is  invariant under the action of $S$. Together, $S$ and $T$ generate a metacyclic group, $G$, say, of order 55, which fixes
$\phi$ and leaves $\M$ invariant.

$G$ acts irreducibly on $V$, since $T$ does. $V$ is the direct sum of $\langle 1\rangle$ and the trace zero hyperplane that are both
$S$-invariant. These are the only non-trivial subspaces of $V$ that are $S$-invariant. Now the set of all elements $x$ such that $\phi_x=0$ is a
$G$-invariant subspace of $V$. Since $G$ acts irreducibly on $V$, this subspace is trivial and hence $\phi_x=0$ only when $x=0$. This explains
why $\dim M=5$.

We may easily verify that $S$ fixes $\phi_1$ and $\langle 1\rangle$ is contained in $\rad \phi_1$. Now $\rad \phi_1$ is invariant under $S$, and since the only proper subspaces of $V$ that are $S$-invariant are $\langle 1\rangle$ and the trace zero hyperplane, as we remarked above, it follows that $\rad \phi_1=\langle 1\rangle$ and thus $\phi_1$ has rank 4. The orbit of $\phi_1$ under $G$ consists of the eleven
elements $\phi_{\epsilon^j}$, $1\leq j\leq 11$. Since a symmetric bilinear form of rank 4 and positive type is not invariant
under an element of order 5, $\phi_1$ has negative type. Thus $\M$ has at least 22 elements of rank 4 and negative type: the eleven forms
$\phi_{\epsilon^j}$ and their negatives.

In its action on $V\setminus 0$, $G$ has six orbits, two orbits being of size 11, the others of size 55. The same holds for the action of
$G$ on $\M$. Furthermore, since $G$ has odd order, none of its elements can map a non-zero element of $V$ or $\M$ into its negative and 
we deduce that that each $G$-orbit is paired with the orbit consisting of the negative of each element in the orbit. 

The 22 elements $\pm \epsilon^j$, $1\leq j\leq 11$ are non-zero common isotropic points of $\M$. Since the common isotropic points are invariant under $G$, if there are more than 22 of them, there are at least $22+110=132$ of them. This number, however, is greater than the number of non-zero  
isotropic points of
$\phi_1$ alone, which is 62. Thus there are precisely 22 non-zero common isotropic points for $\M$.

From our formula for the number of common isotropic points of a subspace of symmetric bilinear forms (Theorem 5 of \cite{DGS}), we can then deduce
that $\M$ has two possible structures. Either it contains 22 elements of rank 4 and negative type, and 220 elements of rank 4 and positive type, or else it contains 132 elements of rank 4 and negative type, and 110 elements of rank 2 and positive type. It seems that to decide which of these two possibilities actually occurs requires more detail than we have provided above and we must defer to Ward's paper for resolution of this issue.

We would like to point out further noteworthy features of the subspace $\M$. Let $v$ be a non-zero vector different from the 22 common isotropic points of $\M$. We map $\M$ linearly into $\mathbb{F}_{3}$ by associating $f\in\M$ with $f(v,v)$. There is thus a 4-dimensional
subspace $\N$, say, of $\M$, consisting of those elements $f$ that satisfy $f(v,v)=0$.

Let $A$ denote the number of non-zero elements of positive type in $\N$, and $B$ denote the number of non-zero elements of negative type in $\N$.
Now $\N$ has at least 24 non-zero common isotropic points, namely the 22 of $\M$, plus $\pm v$. Theorem 5 of \cite{DGS} implies that
\[
 25\leq 3+(A-B)3^{-1},
\]
and also that 3 divides $A-B$. 

We see that $A-B\geq 66$ and since $A+B=80$, we obtain that $A\geq 73$. Now $A$ is even, since if $f$ is an element of positive type in
a subspace, so also is $-f$. Thus $A\geq 74$. However, we cannot have $A=74$, $B=6$, since 3 does not divide $A-B$ in this case. Likewise,
$A=78$, $B=2$ is impossible. It follows that $A=76$, $B=4$, and $\N$ has exactly 26 non-zero common isotropic points, the 22 of $\M$, $\pm v$, and
$\pm w$, where $w$ is some other vector.

On the other hand, if we take an arbitrary 4-dimensional subspace of $\M$, it is easy to see that it either contains exactly 23 common isotropic
points, and hence 70 elements of positive type, and 10 of negative type, or it contains exactly 27 common isotropic points, and hence is of the type described above. By counting, we find that there are 66 subspaces of the former type, and 55 of the latter type (note that there are
$121=66+55$ 4-dimensional subspaces in $\M$). 

It seems probable that the two types of 4-dimensional subspaces of $\M$ are single orbits  under the action of $M_{11}$.

Finally, we mention that the author and his collaborators gave an example of a 5-dimensional constant rank 4 subspace of $5\times 5$ symmetric matrices with entries in $\mathbb{F}_{3}$, \cite{DGS}, Example 1. This subspace contains 220 elements of rank 4 and positive type, and 22 of rank 4 and negative type. At the time of writing of \cite{DGS}, we were unaware of Ward's paper and its relevance to the constant rank theme. The example was found by a computer search, and it is difficult to verify by hand that the five given matrices do indeed span a 5-dimensional constant rank space.

\section{Constant rank $2$ subspaces}

\noindent In this section, we initially let $K$ be an arbitrary field of characteristic different from 2. Let $V$ be a vector space of finite dimension $n$
over $K$. We would like to investigate constant rank 2 spaces of $\Symm(V)$. We begin by making a definition.

\begin{definition} \label{hyperbolic}
Suppose that $n\ge 2$. 
Let $f$ be an element of $\Symm(V)$ of rank 2. We say that $f$ is of \emph{hyperbolic type} if there is a vector $w$ not in the
radical of $f$ that satisfies $f(w,w)=0$.

\end{definition}

We note that when $K$ is a finite field of odd characteristic, a symmetric bilinear form of rank 2 and hyperbolic type is the same as one
of positive type.

\begin{lemma} \label{same_radical}
Let $K$ be a field of characteristic different from $2$ and let $V$ be a finite dimensional vector space over $K$, with $\dim V\geq 2$.
Let $f$ and $g$ be elements of $\Symm(V)$ of rank $2$, and suppose that $g$ is not of hyperbolic type. Suppose also that all non-trivial
$K$-linear combinations of $f$ and $g$ have rank $2$. Then $f$ and $g$ have the same radical, of dimension $n-2$. 

\end{lemma}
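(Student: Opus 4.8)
The plan is to read the conclusion almost directly off Theorem~\ref{radical_totally_isotropic}, invoking the hypothesis on $g$ only at the final step.

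First I would dispose of the bookkeeping. Each of $f$ and $g$ has rank $2$, so $\rad f$ and $\rad g$ both have dimension $n-2$; and since $K$ has characteristic different from $2$, it contains at least $3$ elements, so $|K|\geq 3 = r+1$ with $r=2$. Thus the pair $(f,g)$ satisfies the hypotheses of Theorem~\ref{radical_totally_isotropic}: $f$ has rank $2$ and every $K$-linear combination of $f$ and $g$ has rank at most $2$. The theorem therefore yields $g(u,w)=0$ for all $u,w\in\rad f$; in particular $g(a,a)=0$ for every $a\in\rad f$.

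Next I would argue by contradiction. Suppose $\rad f\neq\rad g$. Since both subspaces have the same dimension $n-2$, neither is contained in the other, so I may pick $a\in\rad f$ with $a\notin\rad g$. By the previous step $g(a,a)=0$, while $a$ does not lie in the radical of $g$; by Definition~\ref{hyperbolic} this is exactly the statement that $g$ is of hyperbolic type, contrary to hypothesis. Hence $\rad f=\rad g$, a subspace of dimension $n-2$, as required.

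I expect no genuine obstacle here: the entire content is the observation that Theorem~\ref{radical_totally_isotropic} forces $\rad f$ to be totally isotropic for $g$, so any vector of $\rad f$ lying outside $\rad g$ would exhibit $g$ as hyperbolic; no dimension count or change of basis is needed. One could instead note that the image of $\rad f$ in the nondegenerate $2$-dimensional quotient $V/\rad g$ is totally isotropic, hence at most $1$-dimensional, giving $\dim(\rad f\cap\rad g)\geq n-3$ and then arguing similarly, but this is superfluous. The only points meriting a moment's care are the degenerate small cases $n=2$, where both radicals are $0$, and $n=3$, both of which the argument above covers unchanged.
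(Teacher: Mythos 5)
Your proof is correct and follows essentially the same route as the paper: apply Theorem~\ref{radical_totally_isotropic} to conclude that $\rad f$ is totally isotropic for $g$, and then observe that any vector of $\rad f$ outside $\rad g$ would witness $g$ being of hyperbolic type. The paper states this last implication in one line; you have merely spelled out the (correct) details.
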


\begin{proof}

 We note that as $K$ has odd characteristic, we have $|K|\geq 3$, and hence Theorem \ref{radical_totally_isotropic} applies. Thus, let $R$ be the radical of $f$.
Theorem \ref{radical_totally_isotropic} implies that $g$ is totally isotropic on $R$. It follows that if $R$ is not the radical of $g$, $g$ is of hyperbolic type, a contradiction. Thus $f$ and $g$ have the same radical. 

\end{proof}

\begin{corollary} \label{dimension_at_most_2}
 Let $K$ be a field of characteristic different from $2$ and let $V$ be a finite dimensional vector space over $K$, with $\dim V\geq 2$.
Let $\M$ be a constant rank $2$ subspace of  $\Symm(V)$ that contains an element that is not of hyperbolic type. 
Then $\dim \M\leq 2$. 

\end{corollary}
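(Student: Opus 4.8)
The plan is to reduce the statement to Lemma \ref{same_radical} by a direct argument. Suppose $\M$ is a constant rank $2$ subspace of $\Symm(V)$ containing some element $g$ that is not of hyperbolic type; let $R$ denote the radical of $g$, which has dimension $n-2$. Take any element $f\in\M$ with $f$ and $g$ linearly independent (if no such $f$ exists then $\dim\M\le 1\le 2$ and we are done). Every non-trivial $K$-linear combination of $f$ and $g$ lies in $\M$ and hence has rank $2$, so the hypotheses of Lemma \ref{same_radical} are met, and we conclude that $f$ has radical $R$ as well.

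Thus every element of $\M$ has the same radical $R$ of dimension $n-2$. This means $\M$ can be regarded as a constant rank $2$ subspace of symmetric bilinear forms on $\overline{V}\times\overline{V}$, where $\overline{V}=V/R$ has dimension $2$. But $\Symm(\overline V)$ itself has dimension $\binom{3}{2}=3$, and a rank $2$ form on a $2$-dimensional space is simply a nondegenerate one; so $\M$ embeds into a $3$-dimensional space. To get the sharper bound $\dim\M\le 2$ I would argue that $\M$, viewed inside $\Symm(\overline V)$, cannot be all of $\Symm(\overline V)$: a full $3$-dimensional space of symmetric $2\times 2$ matrices over a field of characteristic $\neq 2$ contains singular nonzero elements (for instance $\mathrm{diag}(1,0)$), contradicting the constant rank $2$ hypothesis. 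Hence $\dim\M\le 2$.

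I expect the only mild subtlety to be the very last step, pinning down $2$ rather than $3$: one must confirm that any $3$-dimensional subspace of symmetric $2\times2$ matrices necessarily contains a nonzero rank-$\le 1$ element. Over a field $K$ with $\mathrm{char}\,K\neq 2$ this is immediate, since such a subspace is the whole of $\Symm(\overline V)$, which obviously contains rank $1$ forms; so there is really no obstacle here. Everything else is a clean application of Lemma \ref{same_radical} together with the observation that linear independence from $g$ inside $\M$ automatically supplies the ``all linear combinations have rank $2$'' hypothesis of that lemma.
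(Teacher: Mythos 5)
Your proof is correct and follows essentially the same route as the paper: apply Lemma \ref{same_radical} to conclude that all non-zero elements of $\M$ share the radical $R$ of the non-hyperbolic element, then view $\M$ inside $\Symm(V/R)$ with $\dim V/R=2$. You are in fact slightly more careful than the paper at the final step, observing explicitly that $\M$ cannot be all of the $3$-dimensional space $\Symm(V/R)$ because that space contains rank $1$ forms; the paper leaves this point implicit.
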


\begin{proof}
 
Let $g$ be any element of $\M$ that is not of hyperbolic type and let $R$ be the radical of $g$. Let $f$ be any other non-zero element of
$\M$. By Lemma \ref{same_radical}, $R$ is also the radical of $f$. Thus $R$ is the common radical of all non-zero elements
of $\M$.

Let $\overline{V}=V/R$. It is straightforward to see that $\M$ is isomorphic to a constant rank 2 subspace of $\Symm(\overline{V})$, since
$R$ is the radical of each non-zero element of $\M$. Since $\dim \overline{V}=2$, it follows that $\dim \M\leq 2$.
\end{proof}

\begin{theorem} \label{dimension_at_most_n-1}
 Let $V$ be a  vector space of dimension $n\geq 2$ over $\mathbb{F}_{q}$, where $q$ is a power of an odd prime, 
and 
let $\M$ be a constant rank $2$ subspace of $\Symm(V)$. Suppose that each non-zero element
of $\M$ is of hyperbolic type. Then $\dim \M\le n-1$. 

\end{theorem}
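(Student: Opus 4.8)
The plan is to rule out $\dim\M=n$; since a constant rank $2$ subspace of $\Symm(V)$ has dimension at most $n$ whenever $q\geq 3$ (the bound recalled in the introduction, applied with $r=2$ and $q\geq r+1$), this immediately gives $\dim\M\leq n-1$.

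So I would argue by contradiction and suppose $\dim\M=n$. The point I would exploit is the remark made just after Definition \ref{hyperbolic}: over a finite field of odd characteristic, a symmetric bilinear form of rank $2$ and hyperbolic type is exactly one of positive type. Hence every one of the $q^n-1$ non-zero elements of $\M$ has rank $2$ and positive type, so in the notation used in the proofs of Theorems \ref{maximum_dimension_different_radicals} and \ref{even_constant_rank_dimension} we have $A=q^n-1$ and $B=0$. Feeding this into the formula of Theorem 5 of \cite{DGS}, the total number of common isotropic points of $\M$ is
\[
 q^{\,n-n}+(A-B)q^{\,n-n-1}=1+\frac{q^n-1}{q}.
\]
Since $q\nmid q^n-1$, this is not an integer, which is absurd for a quantity counting a set of vectors. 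Therefore $\dim\M\neq n$, and so $\dim\M\leq n-1$.

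I expect the only genuine subtlety to be the bookkeeping that forces $B=0$: if one allowed even a single element of negative type, then $(A-B)/q$ could perfectly well be an integer and no contradiction would ensue — which is consistent with the fact that, by Corollary \ref{dimension_at_most_2}, a constant rank $2$ subspace containing a non-hyperbolic element has dimension at most $2$, a quite different regime. A reader who did not spot the hyperbolic/positive-type identification would be driven toward a much longer structural argument: write each non-zero $f\in\M$ as $\lambda_f\otimes\mu_f+\mu_f\otimes\lambda_f$, put $W_f=\langle \lambda_f,\mu_f\rangle\subseteq V^{*}$ (two-dimensional), use Theorem \ref{radical_totally_isotropic} to deduce that the quadratic form $v\mapsto f(v,v)$ vanishes on the radical of any other element and hence that $W_f\cap W_g\neq 0$ for all $f,g$, then invoke the classical dichotomy that a family of pairwise-meeting planes in $V^{*}$ either shares a common line or lies in a common $3$-dimensional subspace. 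The first alternative puts $\M$ inside the $n$-dimensional space of forms vanishing on a fixed hyperplane, which contains a rank $1$ form, forcing $\dim\M\leq n-1$; the second, more delicate case reduces everything to a $3$-dimensional ambient space. The integrality argument above bypasses all of this.
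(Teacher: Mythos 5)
Your proof is correct and takes essentially the same route as the paper: both rule out $\dim\M=n$ by observing that every non-zero element is of positive (hyperbolic) type, so the common-isotropic-point formula of Theorem 5 of \cite{DGS} evaluates to the non-integer $1+(q^n-1)/q$, a contradiction. The only cosmetic difference is that the paper handles $\dim\M>n$ by passing to an $n$-dimensional subspace rather than invoking the prior bound $\dim\M\leq n$ from \cite{Gow}; either reduction is fine.
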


\begin{proof}
Suppose, if possible, that $\dim M\geq n$. Then, taking if necessary a subspace of dimension $n$ in $\M$, we may assume that $\dim M=n$.
We know then that in this case the number of common isotropic points is
\[
 q^{n-n}-1+(q^n-1)q^{n-n-1}=q^{n-1}-q^{-1}.
\]
However, since this number is not an integer, we have a contradiction. Thus, $d\leq n-1$, as required.

\end{proof}

Theorem \ref{dimension_at_most_n-1} is optimal, since $(n-1)$-dimensional constant rank 2
subspaces of $\Symm(V)$ can be constructed, in which all non-zero elements have hyperbolic type. See, for example, Theorem 7 of \cite{DGS}. We also note that Theorem \ref{dimension_at_most_n-1} does not hold
necessarily for all fields of characteristic different from $2$. For example, consider the real $2\times 2$ matrices
\[
\left(
\begin{array}
{rr}
        a&b\\
        b&-a    
\end{array}
\right),
\]
where $a$ and $b$ run over the real numbers. It is easy to verify that if $a$ and $b$ are not both zero, the matrix defines a real symmetric bilinear
form of rank 2 and hyperbolic type. Thus we have a two-dimensional real subspace of symmetric bilinear forms of rank 2 and hyperbolic type, defined
on a two-dimensional real vector space.

\section{Constant rank $4$ subspaces}

\noindent We would like to extend the analysis of the previous section to investigate general subspaces of symmetric bilinear forms of even constant rank. To this end, we return to the hypothesis that $V$ has dimension $n$ over $\mathbb{F}_{q}$.

We wish to draw attention to the following relevant facts. Suppose that $t$ is a positive integer with $2t\leq n$. Then the largest dimension
of a constant rank $2t$ subspace of $\Symm(V)$, in which each non-zero element has positive type, is $n-t$. Furthermore, constant rank $2t$ subspaces of $\Symm(V)$ of dimension $n-t$ and positive type exist for all $q$. See, for example, Theorems 6 and 7 of \cite{DGS}.

We suspect that if $n>3t$, the largest dimension of a constant rank $2t$ subspace of $\Symm(V)$ is $n-t$ and if this dimension is realized, then all non-zero elements in the subspace have positive type. We may need some requirement that $q\geq 2t+1$ for this to be true. Theorem \ref{dimension_at_most_n-1} has confirmed this bound when $t=1$, the simplest case.

At present, proof of this proposed upper bound for the dimension is missing, even in the probably simplest remaining case, when $t=2$. We will nonetheless present a proof of a weaker statement, namely, if $q$ is odd and at least 5, and $\M$ is a constant rank 4 subspace of $\Symm(V)$, then
$\dim \M\leq n-1$ if $n\geq 5$. We would expect the bound $\dim \M\leq n-2$ to hold, but we have been unable to achieve this. As will be seen, our proof involves some careful analysis of the way that the radicals of the non-zero elements of $\M$ intersect, and may not be the best
approach to the more general dimension problem.

Let $\M$ be a subspace of $\Symm(V)$ and let $U$ be a subspace of $V$. Extending the notation of  Section 3, we set
\[
 \M_U=\{ f\in \M: U\leq \rad f\}.
\]
It is easy to verify that $\M_U$ is a subspace of $\M$. 

\begin{lemma} \label{radical_intersection}
 
Let $\M$ be a constant rank $4$ subspace of $\Symm(V)$ of dimension at least $2$ and suppose that $q\geq 5$. Suppose that $\M$ contains an element,
$f$, say, of negative type and let $R=\rad f$. Let $g$ be any other non-zero element of $\M$ and let $S=\rad g$. Then $R\cap S$ has codimension at most $1$ in both $R$ and $S$. 
\end{lemma}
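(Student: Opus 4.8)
The plan is to reduce the statement to a fact about maximal totally isotropic subspaces of rank-$4$ forms over $\mathbb{F}_q$. Since $\dim R=\dim S=n-4$, the assertion is equivalent to $\dim(R\cap S)\geq n-5$, which by the usual dimension formula is in turn equivalent to $\dim(R+S)\leq n-3$; so it suffices to bound $\dim(R+S)$ from above.

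First I would invoke Theorem \ref{radical_totally_isotropic}. Because $\M$ is a constant rank $4$ subspace, every non-zero element of $\M$ has rank exactly $4$, so every $K$-linear combination of $f$ and $g$ has rank at most $4$, and $q\geq 5=4+1$. Applying the theorem with the roles of the two forms interchanged — which is legitimate since $g$ also has rank $4$ — yields $f(u,w)=0$ for all $u,w\in S=\rad g$; that is, $S$ is totally isotropic for $f$. (The "direct" application would give instead that $R$ is totally isotropic for $g$, but for the sharp constant I want the version involving $f$.)

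Next I would pass to $\overline V=V/R$, which has dimension $n-(n-4)=4$. The form $f$ induces a non-degenerate symmetric bilinear form $\overline f$ on $\overline V$, and by the definition of the type $\overline f$ has negative type; over $\mathbb{F}_q$ with $q$ odd, a non-degenerate symmetric bilinear form of rank $4$ and negative type has maximal totally isotropic subspaces of dimension $1$. The image $(R+S)/R$ of $S$ in $\overline V$ is totally isotropic for $\overline f$ (because $S$ is totally isotropic for $f$), hence $\dim((R+S)/R)\leq 1$, i.e. $\dim(R+S)\leq \dim R+1=n-3$. Therefore $\dim(R\cap S)=\dim R+\dim S-\dim(R+S)\geq 2(n-4)-(n-3)=n-5$, so $R\cap S$ has codimension at most $1$ in each of $R$ and $S$, as claimed.

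The one genuinely substantive point is the appeal to the type: it is precisely the hypothesis that $f$ has negative type that forces the rank-$4$ quotient form $\overline f$ to have Witt index $1$ rather than $2$. If instead one used only that $R$ is totally isotropic for $g$ and pushed $R$ into $V/S$, then — with no control on the type of $g$ — one could only conclude that the image is totally isotropic of dimension at most $2$, giving the weaker bound "codimension at most $2$". So the obstacle is entirely about extracting the optimal constant, and it is handled by choosing to work inside $V/R$, where the negative type of $f$ is available.
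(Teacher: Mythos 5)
Your proposal is correct and follows essentially the same route as the paper: apply Theorem \ref{radical_totally_isotropic} (with the roles of $f$ and $g$ arranged so that $S=\rad g$ is totally isotropic for $f$), then pass to $V/R$ where the induced form $\overline{f}$ is non-degenerate of rank $4$ and negative type, so that the totally isotropic image $(R+S)/R\cong S/(R\cap S)$ has dimension at most $1$. Your closing remarks about why the negative type of $f$ is the essential ingredient match the logic of the paper's argument exactly.
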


\begin{proof}
 
As $q\geq 5$, Theorem \ref{radical_totally_isotropic} implies that $S$ is totally isotropic for $f$. Let $\overline{f}$ be the element of
$\Symm(V/R)$ induced by $f$. Then $\overline{f}$ has rank 4 and negative type. We deduce that any non-zero subspace of $V/R$ that is totally isotropic for
$\overline{f}$ is one-dimensional. Now
\[
 (R+S)/R\cong S/R\cap S
\]
is totally isotropic for $\overline{f}$, by our remarks above. Hence $\dim (S/R\cap S)\leq 1$. It follows that $R\cap S$ has codimension at most 1 in $S$, and hence also in $R$, since $\dim R=\dim S$.
\end{proof}

We continue with the hypothesis of Lemma \ref{radical_intersection}, but assume additionally that $n\geq 5$. We have $\dim R=n-4$. Let $U_1$,
\dots, $U_t$ be all the subspaces of $R$ of codimension 1 in $R$, where
\[
 t=\frac{q^{n-4}-1}{q-1}.
\]
We set
\[
 \M_i=\M_{U_i}
\]
for $1\leq i\leq t$. We note that $\M_R\leq M_i$ for all $i$, since $U_i\leq R$. 

\begin{lemma} \label{M_i_intersections}
Adhering to the notation introduced above, we have $\M_i\cap M_j=\M_R$ if $i\neq j$. We also have $\dim \M_i\leq 4$ for all $i$ and
$\dim \M_R\leq 4$. 

\end{lemma}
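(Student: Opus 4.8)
The plan is to handle the three assertions separately, reducing both dimension bounds to a quotient construction together with Theorem \ref{even_constant_rank_dimension}.

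For the intersection identity, the argument is immediate from the structure of the $U_i$. If $g$ lies in $\M_i\cap\M_j$ with $i\neq j$, then $U_i+U_j\leq\rad g$; but $U_i$ and $U_j$ are distinct subspaces of codimension $1$ in $R$, so $U_i+U_j=R$, whence $R\leq\rad g$ and $g\in\M_R$. The reverse inclusion holds trivially since $U_i\leq R$ for every $i$; indeed $\M_R\subseteq\M_i$ for all $i$, a fact I will reuse at the end.

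For the bound $\dim\M_i\leq 4$, the idea is to quotient out $U_i$. Put $\overline V=V/U_i$, so that $\dim\overline V=n-(n-5)=5$. Since $U_i\leq\rad f$ for each $f\in\M_i$, every such $f$ induces a symmetric bilinear form $\overline f$ on $\overline V\times\overline V$, and the map $f\mapsto\overline f$ is linear, injective (a form inducing $0$ on $V/U_i$ is the zero form), and rank-preserving (quotienting by a subspace of the radical leaves the rank unchanged). Hence the image of $\M_i$ in $\Symm(\overline V)$ is a subspace all of whose non-zero elements have rank $4$, and it is non-zero because $f$ itself lies in $\M_i$ (as $U_i\leq R=\rad f$). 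As $\dim\overline V=5$, $q$ is odd, and $q\geq 5$, Theorem \ref{even_constant_rank_dimension}, applied with dimension $5$, gives $\dim\M_i\leq 4$. The bound $\dim\M_R\leq 4$ then follows at once from $\M_R\subseteq\M_i$; alternatively one can quotient by $R$ itself and invoke the bound $\dim\leq\dim(V/R)=4$ for a constant rank $4$ subspace of $\Symm(V/R)$, which is valid since $q\geq 5$.

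The genuine content here is conceptual rather than computational: one has to notice that $\M_i$, once pushed down to $V/U_i$, is exactly a constant rank $4$ subspace of symmetric bilinear forms on a $5$-dimensional space, so that the already-proved Theorem \ref{even_constant_rank_dimension} applies verbatim. The remaining points --- injectivity and rank-invariance of the quotient map, and the non-vanishing of the quotiented subspace (secured by $f\in\M_i$) --- are routine, and I anticipate no serious obstacle.
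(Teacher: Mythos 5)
Your proof is correct and follows essentially the same route as the paper's: the same $U_i+U_j=R$ argument for the intersection, and the same quotient-by-$U_i$ reduction to a constant rank $4$ subspace of $\Symm(V/U_i)$ with $\dim(V/U_i)=5$, handled by Theorem \ref{even_constant_rank_dimension}. (You have in fact cited the right theorem here; the paper's own proof points at the constant rank $2$ result, which appears to be a mislabelled reference.)
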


\begin{proof}
 
Suppose that $i\neq j$ and let $h$ be an element of $\M_i\cap \M_j$. Then $U_i\leq \rad h$ and $U_j\leq \rad h$, and hence $U_i+U_j\leq \rad h$.
Since $U_i\neq U_j$, and each subspace has codimension 1 in $R$, we obtain that $R\leq \rad h$. This implies that $h\in \M_R$. On the other hand,
since $\M_R\leq \M_i$ and $\M_R\leq \M_j$, it is clear that $\M_R\leq \M_i\cap M_j$ and we therefore have the equality  $\M_i\cap M_j=\M_R$.

Suppose that $\theta$ is an element of $\M_i$. Then $U_i\leq \rad \,\theta$ and thus $\theta$ determines an element $\overline{\theta}$, say,
of $\Symm(V/U_i)$, where $\overline{\theta}$ has rank 4. Let $\overline{\M_i}$ denote the image of $\M_i$ under the mapping sending
$\theta$ to $\overline{\theta}$. Since $\theta\to \overline{\theta}$ is linear, and all elements of $\M_i$ vanish on $U_i$, $\overline{\M_i}$ is a subspace of $\Symm(V/U_i)$ and $\dim \overline{\M_i}=\M_i$. 

Now $\dim (V/U_i)=5$ and $\overline{\M_i}$ is a constant rank 4 subspace of $\Symm(V/U_i)$. Since we are assuming that $q\geq 5$, $\dim \overline{\M_i}\leq 4$ by Theorem \ref{dimension_at_most_n-1}. Thus $\dim \M_i\leq 4$ for all $i$. Finally, since $\M_R\leq M_i$, $\dim \M_R\leq 4$. However, we can equally well regard $\M_R$ as a constant rank 4 subspace of $\Symm(V/R)$. Since $\dim (V/R)=4$, it follows that
$\dim \M_R\leq 4$ (and this inequality holds for all $q$).

\end{proof}

\begin{theorem} \label{constant_rank_4_dimension}
 
Suppose that $q$ is odd and at least $5$. Let $\M$ be a constant rank $4$ subspace of $\Symm(V)$. Then if $n=\dim V\geq 5$, we have $\dim \M\leq n-1$.
\end{theorem}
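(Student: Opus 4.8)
The plan is to argue by contradiction, splitting on the types of the non-zero elements of $\M$; each such element has even rank $4$ and hence (as $q$ is odd) a well-defined type, positive or negative. If every non-zero element of $\M$ has positive type, then the bound recalled at the start of this section (Theorems 6 and 7 of \cite{DGS}, with the even rank taken to be $4$) gives $\dim\M\le n-2\le n-1$, and we are done. If $n=5$, then $q$ is odd with $q\ge 5=n$, so Theorem \ref{even_constant_rank_dimension} applies directly and yields $\dim\M\le n-1$. Hence from now on I would assume $n\ge 6$ and that $\M$ contains an element $f$ of negative type, and set $R=\rad f$, so that $\dim R=n-4\ge 2$ and $\dim(V/R)=4$.

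Next I would deploy the apparatus of Lemmas \ref{radical_intersection} and \ref{M_i_intersections}. With $U_1,\dots,U_t$ the codimension-$1$ subspaces of $R$ (so $t=(q^{n-4}-1)/(q-1)$), $\M_i=\M_{U_i}$, and $\M_R=\{g\in\M:R\le\rad g\}$, the central claim is that \emph{every} non-zero $g\in\M$ lies in some $\M_i$. Indeed, if $\rad g=R$ then $g\in\M_R\subseteq\M_i$ for every $i$; otherwise Lemma \ref{radical_intersection} forces $R\cap\rad g$ to have codimension exactly $1$ in $R$ (codimension $0$ would give $R=\rad g$, since the two radicals have equal dimension), so $R\cap\rad g=U_i$ for some $i$ and hence $g\in\M_i$. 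By Lemma \ref{M_i_intersections}, $\M_i\cap\M_j=\M_R$ for $i\ne j$, while $\dim\M_i\le 4$ and $a:=\dim\M_R\le 4$. Passing to $\M/\M_R$, it follows that the non-zero images $\M_i/\M_R$ are pairwise distinct, pairwise meet in $0$, and cover $\M/\M_R$: they partition it into at most $t$ parts, each of dimension at most $4-a$.

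The contradiction then comes from counting the non-zero vectors of $\M/\M_R$ through this partition. Writing $d=\dim\M$, this gives
\[
 q^{d-a}-1\ \le\ t\,(q^{4-a}-1)\ =\ \frac{(q^{n-4}-1)(q^{4-a}-1)}{q-1}.
\]
If $d\ge n$ the left-hand side is at least $q^{n-a}-1$, whereas
\[
 (q-1)(q^{n-a}-1)-(q^{n-4}-1)(q^{4-a}-1)=q^{n-a}(q-2)+q^{n-4}+q^{4-a}-q,
\]
which is strictly positive because $q\ge 5$ and $n\ge 5$ (so $q^{n-4}\ge q$ and $q^{n-a}(q-2)>0$). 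Thus $q^{n-a}-1>\frac{(q^{n-4}-1)(q^{4-a}-1)}{q-1}$, contradicting the displayed inequality, so $d\le n-1$.

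The real work, and the reason the preparatory lemmas are indispensable, is the structural claim of the second paragraph: one needs the radicals of the non-zero forms in $\M$ to cluster so tightly around $R$ that $\M$ is covered by only about $q^{\,n-5}$ subspaces of dimension at most $4$. This rests essentially on the \emph{negative} type of $f$ (without it, Lemma \ref{radical_intersection} yields no codimension-$1$ control and the covering has too many parts) and on the uniform bound $\dim\M_i\le 4$, which is in turn the $n=5$ instance of the constant rank $4$ dimension problem and hence comes from Theorem \ref{even_constant_rank_dimension}. I expect the arithmetic to be entirely routine; the only genuine obstacle is securing this clustering, together with the bookkeeping needed to make sure the degenerate low-dimensional situations (namely $n=5$, equivalently $\dim R\le 1$) are honestly subsumed or dispatched separately, which I would do via Theorem \ref{even_constant_rank_dimension} as indicated above.
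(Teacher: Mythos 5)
Your proposal is correct and follows essentially the same route as the paper's own proof: handle $n=5$ via Theorem \ref{even_constant_rank_dimension} and the all-positive-type case via Theorem 6 of \cite{DGS}, then use Lemmas \ref{radical_intersection} and \ref{M_i_intersections} to cover $\M/\M_R$ by at most $t=(q^{n-4}-1)/(q-1)$ subspaces of dimension at most $4-\dim\M_R$ and derive a counting contradiction. Your algebraic verification of the final inequality is a correct (and slightly more explicit) rearrangement of the one in the paper.
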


 \begin{proof}
If $n=5$, the result follows from Theorem \ref{even_constant_rank_dimension}. Thus we may assume that $n\geq 6$. 

Suppose now that all non-zero elements of $\M$ have positive type. Then $\dim \M\leq n-2$ by Theorem 6 of \cite{DGS}, and we are finished in this case.

We may therefore assume that $\M$ contains a non-zero element, $f$, say, of negative type, and we set $R=\rad f$. In accordance with our previous discussion, we define the subspaces $\M_1$, \dots, $\M_t$ of $\M$, where $t=(q^{n-4}-1)/(q-1)$. Lemma \ref{radical_intersection} implies that
each element of $\M$ is contained in some subspace $\M_i$.

Consider now the vector space $\M/\M_R$. If $i\neq j$, the subspaces $\M_i/\M_R$ and $\M_i/\M_R$ intersect trivially, by Lemma \ref{M_i_intersections}. Thus $\M/\M_R$ is the  union of the subspaces $\M_i/\M_R$, but this may not be a subspace partition as some of the subspaces may be the zero subspace. 

Suppose if possible that $\dim \M=n$. We set $\dim \M_R=e$, where $1\leq e\leq 4$, since $f\in \M_R$. Then $\M/\M_R$ is the union of at most
$t$ subspaces each of dimension at most $4-e$, intersecting trivially. Since $\dim (\M/\M_R)=n-e$, we obtain
\[
 q^{n-e}-1\leq t(q^{4-e}-1)=\frac{(q^{n-4}-1)(q^{4-e}-1)}{q-1}.
\]
This yields the inequality
\[
 (q^{n-e}-1)(q-1)\leq (q^{n-4}-1)(q^{4-e}-1),
\]
which in turn yields
\[
 q^{n-e+1}\leq 2q^{n-e}-q^{n-4}-q^{4-e}+q,
\]
which is clearly impossible. Thus we have $\dim \M\leq n-1$, as required.

 \end{proof}

\section{Conclusions}

\noindent We have shown that in some cases, the dimension of a constant rank subspace of $\Symm(V)$ is less than $n$. Furthermore, we have shown, when the rank is odd, that the radicals of the non-zero elements of a constant rank space of sufficiently large dimension are all the same. On the other hand, we have provided examples that show that the radicals in certain constant rank subspaces of reasonably large dimension can all be different.

It is reasonable to assert that improved dimension bounds for constant rank subspaces of $\Symm(V)$ depend on answers to two questions.
One answer needed is what are the relative numbers of elements of positive type and of negative type in a constant rank subspace.
Another desideratum is a better understanding of how radicals are distributed throughout $V$. At two extremes, all radicals are the same or all
radicals are different.

Finally, it would be interesting to find further counterexamples to our dimension bounds when $q$ is small, and to look for sporadic constant rank subspaces with as rich a structure as that displayed in Section 6.


\begin{thebibliography}{9}


\bibitem{DGS} J.-G. Dumas, R. Gow and J. Sheekey, \emph{Rank properties of subspaces of symmetric and hermitian matrices
over finite fields}, Finite Fields Appl. {\bf 17} (2011), 504-520.
\smallskip
\bibitem{Gow} R. Gow, \emph{A dimension bound for constant rank subspaces of matrices over a finite field}, arXiv org e-Print archive math/1501.0272v1, 12 Jan 2015.
\smallskip
\bibitem{Fill} P. Fillmore, C. Laurie and H. Radjavi, \emph{On matrix spaces with zero determinant}, Linear and Multilinear
Algebra {\bf 18} (1985), 255-266.
\smallskip
\bibitem{Ward} H. N. Ward, \emph{A form for $M_{11}$}, J. Algebra {\bf 37} (1975), 340-361.


\end{thebibliography}
\end{document}